\documentclass[reqno]{amsart}
\usepackage{amssymb,amsmath,amsthm,amstext,amsfonts}
\usepackage[dvips]{graphicx}
\usepackage[ansinew]{inputenc}
\usepackage{color}

\pagestyle{plain} \pagenumbering{arabic}

\makeatletter \@addtoreset{equation}{section} \makeatother

\renewcommand\thetable{\thesection.\@arabic\c@table}

\theoremstyle{plain}
\newtheorem{maintheorem}{Theorem}

\newtheorem{theorem}{Theorem }[section]
\newtheorem{proposition}[theorem]{Proposition}
\newtheorem{lemma}[theorem]{Lemma}
\newtheorem{corollary}[theorem]{Corollary}
\newtheorem{claim}{Claim}
\theoremstyle{definition} \theoremstyle{remark}

\newcommand{\eps}{\varepsilon}

\newcommand{\Z}{\mathbb{Z}}
\newcommand{\N}{\mathbb{N}}
\newcommand{\R}{\mathbb{R}}

\newcommand{\ov}{\overline}

\newcommand{\fix}{\operatorname{Fix}}
\newcommand{\dif}{\operatorname{Diff}^1(M)}
\newcommand{\sink}{\operatorname{Sink}}
\newcommand{\tcap}{\pitchfork}

\newcommand{\cR}{\mathcal{R}}
\newcommand{\cP}{\mathcal{P}}

\newcommand{\cM}{\mathcal{M}}
\newcommand{\cB}{\mathcal{B}}

\newcommand{\cU}{\mathcal{U}}

\newcommand{\cA}{\mathcal{A}}

\title{Dirac physical measures for generic diffeomorphisms}

\author{Bruno Santiago}
\address{Universit\'e de Bourgogne}
\email{bruno.rodrigues-santiago@u-bourgogne.fr}
\date{\today}

\begin{document}

\maketitle

\begin{abstract}
We prove that, for a $C^1$ generic diffeomorphism, the only Dirac physical 
measures with dense statistical basin are those supported on sinks.
\end{abstract} 

{\footnotesize
\vskip 5mm
\noindent{\bf Keywords: } generic diffeomorphisms, physical measures
\vskip2mm
\noindent{\bf MSC 2010 classification}: 37C05, 37C20, 37D30.}

\section{Introduction}

One of the paradigms of dynamical systems in the last decades has been the Palis program \cite{palis},
which aims to describe the behaviour of every element in a large subset (open and dense, residual etc...) of the space dynamical systems,
mainly diffeomorphisms and vector fields endowed with the $C^r$ topology, $r\geq 0$.

Within the approaches to understand a dynamical system the statistical description of its orbits, that is the accumulation set of the sequence of measures
$$\frac{1}{n}\sum_{l=1}^{n-1}\delta_{f^l(x)},$$
where $\delta_y$ stands for the the Dirac mass concentrated in $y$,
plays a major role.
For instance, it is a conjecture of Palis \cite{palis} that densely in the space of $C^r$ diffeomorphisms of a closed manifold $M$, every element
displays a finite number invariant probability measures which capture the statistics of Lebesgue almost every orbit. 

Moreover, this statistical description, through the works of Sinai, Ruelle and Bowen, was one the most important achievements in the theory of Axiom A diffeomorphisms. 
They established, for every $C^2$ hyperbolic attractor, the existence of a probability measure fully supported in the attractor which disintegrates in an absolutely continuous manner 
with respect to the volume along unstable manifolds. This property, together with the hyperbolicity, implies that the measure captures the statistics of almost every orbit in the basin of attraction. This type of measure is nowadays called SRB.
See for instance \cite{Bowen} and \cite{Young}. 

In contrast to the SRB measures, which are characterized by a strong and rich geometrical property, that requires the existence of unstable manifolds, we have the Dirac mass concentrated at attractive fixed (or periodic) points. 
The paradigmatic example being the north-south dynamics in the sphere, in which the sink captures all the future dynamics, from both the topological and the statistical points of views. Moreover, this dynamics is structurally stable and thus
it persists by small perturbations.

However, there are several examples of non-attractive fixed points which captures almost all statistical behaviour. In the paper \cite{hy} Hu and Young construct a
diffeomorphism of $\mathbb{T}^2$ having a physical measure, with full statistical basin, supported in a fixed point which is
not a sink (it has one contracting direction and a parabolic ``almost unstable'' direction). The example is even topologically conjugated to a linear
Anosov map. A more simple construction presenting the same phenomenon is the figure-eight attractor of
Bowen (see the introduction of \cite{k}), where the fixed point is a saddle whose stable and unstable manifolds form a double connection.

Another construction was given by Saghin, Sun and Vargas \cite{ssv} and Saghin and Vargas \cite{sv}. They built several types of transitive flows with 
Dirac physical measures, which in particular are not attractive fixed points.
It is worth to remark that all their examples present some type of saddle connection. An even more surprising example comes from interval dynamics. 
Hofbauer and Keller showed the existence of quadratic maps with a Dirac physical measure at an unstable fixed point. 
Again, it is worth to remark that within quadratic maps this phenomenon has zero Lebesgue measure. We mention also the work Colli and Vargas \cite{CV}. Using a Newhouse toy model they build a 
hyperbolic saddle type fixed point which is a statistical attractor whose basin has non-empty interior. 
Although in this case the basin is not full, it captures the statistics of every point in some wondaring domain.

All these examples share an interesting feature: they may be complicated from the topological point of view, but from the statistical point of view they are completely trivial. 
The asymptotic statistical behaviour of Lebesgue almost every orbit is captured by a fixed point, which in fact is the sole statistical attractor of the dynamics. 

Nonetheless, each one of them present some clearly non-generic mechanism. It is then natural to ask: is it possible to find a locally 
residual set of diffeomorphisms presenting a statistical attractor which is a hyperbolic saddle type fixed point?

In this note, we give a partial negative answer which includes the case where the basin is full.

\begin{maintheorem}
\label{t.main}
Let $f$ be a $C^1$ generic diffeomorphism. Assume that there exists 
$\sigma\in\fix(f)$ such that $\delta_{\sigma}$ is a physical measure with dense statistical 
basin. Then $\sigma$ is a sink.
\end{maintheorem}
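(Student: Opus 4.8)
\section*{Proof proposal}

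The plan is to argue by contradiction: assume $\sigma$ is not a sink and derive a contradiction with the physicality of $\delta_\sigma$ and the density of its basin $B(\sigma)=\{x:\ \frac1n\sum_{l=0}^{n-1}\delta_{f^l(x)}\to\delta_\sigma\}$. First I would fix the genericity. By the Kupka--Smale property $\sigma$ is hyperbolic. It cannot be a source: near a hyperbolic source there is a neighbourhood $U$ of $\sigma$ on which $f$ uniformly expands, so every orbit visiting $U$ stays inside for at most a number of steps bounded by a constant $L=L(U)$; hence an orbit that is not eventually equal to $\sigma$ visits $U$ with asymptotic frequency at most $L/(L+1)<1$, which is incompatible with its empirical measures converging to $\delta_\sigma$. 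Since $W^s(\sigma)=\{\sigma\}$ for a source, this forces $B(\sigma)=\{\sigma\}$, contradicting $\Leb(B(\sigma))>0$. Therefore $\sigma$ is a genuine saddle: $0<\dim E^u_\sigma<\dim M$, and $W^s(\sigma)$ is an injectively immersed submanifold of dimension $<\dim M$, hence Lebesgue-null; since $\Leb(B(\sigma))>0$ we may pick $x\in B(\sigma)\setminus W^s(\sigma)$, in fact a full-$\Leb$-measure subset of such points.

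Next I would extract dynamical information from such an $x$. As $x\notin W^s(\sigma)$ its forward orbit leaves any small neighbourhood $U$ of $\sigma$ infinitely often, while the frequency of visits to $U$ tends to $1$; an elementary counting argument then shows that the orbit makes sojourns inside $U$ of arbitrarily large length. Using an adapted metric and an invariant cone field near $\sigma$, a sojourn of length $k$ forces the orbit to enter $U$ within distance $C\lambda^k$ of $W^s_{loc}(\sigma)$ and to exit within distance $C\lambda^k$ of $W^u_{loc}(\sigma)$, for some $\lambda<1$. Letting the sojourn lengths tend to infinity and passing to a subsequence of the corresponding exit points (which lie on the orbit of $x$ and stay at distance $\asymp\operatorname{diam}U$ from $\sigma$) produces a point $w\in\omega(x)\cap\big(W^u(\sigma)\setminus\{\sigma\}\big)$. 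Thus $\omega(x)$ is internally chain transitive, contains $\sigma$, and is strictly larger than $\{\sigma\}$, so it is a nontrivial subset of the chain-recurrence class $C(\sigma)$. Separately, density of $B(\sigma)$ makes $\{y:\sigma\in\omega(y)\}$ a dense $G_\delta$; combined with the $C^1$-generic facts (Bonatti--Crovisier, Conley theory) that $\omega(y)$ is a chain-recurrence class for a residual set of $y$ and that $y\mapsto\omega(y)$ is residually continuous, this gives $\omega(y)=C(\sigma)$ on a residual set, so $C(\sigma)$ is Lyapunov stable (a quasi-attractor). A Lyapunov stable set containing $\sigma$ automatically contains $W^u(\sigma)$, hence $\overline{W^u(\sigma)}\subseteq C(\sigma)$; so $C(\sigma)$ is a nontrivial Lyapunov stable chain class, which generically coincides with the homoclinic class $H(\sigma)$ and therefore carries transverse homoclinic orbits, a nontrivial hyperbolic basic set, and periodic orbits of the same index as $\sigma$ homoclinically related to it.

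The final step -- and the main obstacle -- is to convert this picture into a contradiction with physicality. The mechanism I would try to exploit is that the orbit of Lebesgue-almost every $x\in B(\sigma)$ spends asymptotic frequency one inside any neighbourhood of $\sigma$, where $Df$ expands $\dim E^u_\sigma$-volume at the uniform rate $\Sigma^+(\sigma):=\sum_{\lambda_i(\sigma)>0}\lambda_i(\sigma)>0$, while the Kupka--Smale transversality of all homoclinic intersections of $\sigma$ excludes the ``figure-eight''/tangency contractions responsible for the known examples. One would like to conclude that a positive-$\Leb$-measure family of small unstable disks has its $\dim E^u_\sigma$-volume blown up under iteration while remaining confined near $\sigma$, which is impossible; but two difficulties appear: in $C^1$ regularity the usual bounded-distortion estimates are not available, so a naive expanding-disk argument does not close, and orbits of points in $B(\sigma)$ may return to $\sigma$ through the whole of $C(\sigma)=H(\sigma)$ rather than along a homoclinic loop, so the transversality has to be fed back in globally. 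I expect this to be handled by combining Ma\~n\'e's ergodic closing lemma with the Bonatti--Crovisier structure theory of Lyapunov stable classes -- forcing the empirical behaviour of a Lebesgue-typical orbit to register a definite amount of expansion along, or a definite frequency of time near, periodic orbits of $C(\sigma)$ distinct from $\sigma$, thereby contradicting convergence of its empirical measures to $\delta_\sigma$. This is the technical heart of the proof.
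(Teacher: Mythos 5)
Your proposal correctly carries out the first two stages (Kupka--Smale forces $\sigma$ to be a hyperbolic saddle, and the density of the statistical basin combined with the Bonatti--Crovisier/Morales--Pac\'{\i}fico genericity of Lyapunov stable $\omega$-limit sets yields that the class of $\sigma$ is a nontrivial Lyapunov stable homoclinic class), and up to that point you are essentially following the paper. But the final step, which you yourself flag as ``the main obstacle'' and leave as a hoped-for combination of the ergodic closing lemma with bounded-distortion-free volume estimates, is a genuine gap: no contradiction is actually derived, and the mechanism you sketch (blowing up the $\dim E^u$-volume of unstable disks confined near $\sigma$) is not the one that works here --- as you note, it does not close in $C^1$ regularity.

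The paper closes the argument with three ingredients you are missing. First, an elementary volume lemma of Ara\'ujo: since $\delta_\sigma$ is physical, a change-of-variables plus Borel--Cantelli argument shows $|\det Df(\sigma)|\le 1$, and a Franks-type perturbation makes this strict on an open dense set; so $\sigma$ is dissipative. This is a constraint on the \emph{full} Jacobian at $\sigma$, not just the unstable one, and it is the quantitative input your sketch never extracts from physicality. Second, since $f$ has no sinks at all (the dense basin of $\delta_\sigma$ would meet the basin of any sink), the Bochi--Bonatti theorem on perturbing Lyapunov spectra of periodic orbits forces $H(\sigma)$ to carry a dominated splitting $E\oplus F$ with $\det(Df(\sigma)|_F)>1$: otherwise one could equalize eigenvalues along long dissipative periodic orbits homoclinically related to $\sigma$ and create a sink nearby. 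Third --- and this is the actual contradiction --- the Catsigeras--Cerminara--Enrich entropy inequality for physical measures supported on a Lyapunov stable set with a dominated splitting gives
\begin{equation*}
h_{\delta_\sigma}(f)\;\ge\;\sum_{i=1}^{\dim F}\log\lambda_i\;=\;\log\det\bigl(Df(\sigma)|_F\bigr)\;>\;0,
\end{equation*}
which is absurd because a Dirac mass at a fixed point has zero entropy. So the endgame is an entropy estimate, not a distortion or closing-lemma argument; without it (or some substitute of comparable strength) your proof does not conclude.
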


As an immediate corollary, we obtain that, apart from sinks, there is no locally generic example of a fixed point which is a statistical attractor with full basin.

\begin{corollary}
 \label{c.cor}
Let $f$ be a $C^1$ generic diffeomorphism. Assume that there exists 
$\sigma\in\fix(f)$ such that $\delta_{\sigma}$ is a physical measure with full statistical basin. Then $\sigma$ is a sink. 
\end{corollary}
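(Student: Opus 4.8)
Assume for contradiction that $\sigma$ is not a sink. Since $f$ is generic it is Kupka--Smale, so $\sigma$ is hyperbolic and, not being a sink, satisfies $\dim E^u_\sigma\geq 1$; hence $W^s(\sigma)$ is an injectively immersed submanifold of positive codimension, so $\Leb\big(W^s(\sigma)\big)=0$, and since $\delta_\sigma$ is a physical measure we obtain $\Leb\big(B(\sigma)\setminus W^s(\sigma)\big)>0$, where $B(\sigma)$ denotes the statistical basin of $\delta_\sigma$. The first step is to localize this basin. If $x\in B(\sigma)$ then $\frac1n\sum_{l=0}^{n-1}\delta_{f^l(x)}\to\delta_\sigma$, so the orbit of $x$ visits every neighbourhood of $\sigma$ with frequency one; in particular $\sigma\in\omega(x)$. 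As $\omega(x)$ is internally chain transitive it lies in the chain recurrence class $C(\sigma)$, and therefore $B(\sigma)\subseteq\{x:\omega(x)\subseteq C(\sigma)\}$.

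Now density enters. If $C(\sigma)$ were not a quasi-attractor, Conley theory would provide an attractor $A$ with $\sigma\notin A$; shrinking a trapping region $U$ of $A$ we may assume $\sigma\notin\overline U$, and then every orbit starting in $U$ has its $\omega$-limit inside $A$, hence eventually stays uniformly away from $\sigma$, so $U\cap B(\sigma)=\emptyset$ --- contradicting the density of $B(\sigma)$. Thus $C(\sigma)$ is a quasi-attractor; moreover $C(\sigma)\neq\{\sigma\}$, since $C(\sigma)=\{\sigma\}$ would force $B(\sigma)\subseteq W^s(\sigma)$, a Lebesgue-null set. By the genericity of $f$ one has $C(\sigma)=H(\sigma)$ (Bonatti--Crovisier), so $H(\sigma)$ is a \emph{nontrivial} quasi-attractor; in particular $\sigma$ is a saddle (a source would have $C(\sigma)=\{\sigma\}$) and $\overline{W^u(\sigma)}=H(\sigma)$. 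We have reduced the theorem to excluding the following configuration: $\sigma$ is a saddle contained in a nontrivial quasi-attractor $H(\sigma)=\overline{W^u(\sigma)}$, and $\delta_\sigma$ is a physical measure with dense basin.

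Excluding this last configuration is the heart of the matter and the step I expect to be the main obstacle. It is the diffuse analogue of Bowen's figure-eight attractor, and that example is non-generic precisely because it forces the saddle connection $W^u(\sigma)\setminus\{\sigma\}\subseteq W^s(\sigma)$, which is forbidden by the Kupka--Smale transversality of $W^s(\sigma)$ and $W^u(\sigma)$. The plan is to upgrade this obstruction quantitatively: transversality yields a transverse homoclinic point of $\sigma$, hence a horseshoe $K$ with $\sigma\in K\subseteq H(\sigma)$, and one aims to conclude that in fact $\Leb\big(B(\sigma)\big)=0$. A natural mechanism is to slice a small neighbourhood of $\sigma$ by local unstable plaques and push them forward: off the Lebesgue-null local stable set, almost every point of such a plaque leaves the neighbourhood along the unstable direction and must perform a homoclinic excursion of at least a definite length before it can come back near $\sigma$, and the local product structure around $K$ prevents the empirical averages along a full-measure family of plaques from concentrating at $\delta_\sigma$; by Fubini this contradicts $\Leb\big(B(\sigma)\big)>0$. (Alternatively, one invokes the finer $C^1$-generic description of quasi-attractors and of the invariant measures realized on their basins.) Once this configuration is excluded we reach a contradiction, so $\sigma$ is a sink; and Corollary~\ref{c.cor} is then immediate, since a full statistical basin is in particular dense.
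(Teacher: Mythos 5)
Your closing observation is, in fact, the paper's entire proof of Corollary~\ref{c.cor}: a full statistical basin has full Lebesgue measure, hence is dense in the closed manifold $M$, so the corollary is an immediate consequence of Theorem~\ref{t.main}. Had you stopped there and cited the theorem, the argument would be complete. Instead you attempt to reprove the dense-basin statement itself, and that attempt has a genuine gap at precisely the step you yourself flag as the main obstacle.

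The reduction you carry out (membership in the basin forces $\sigma\in\omega(x)$; density forces the chain class $C(\sigma)$ to be a quasi-attractor, generically equal to $H(\sigma)$; hence $\sigma$ is a saddle inside a nontrivial Lyapunov stable class) runs broadly parallel to the paper's use of Corollary~\ref{c.acumula} and Theorem~\ref{t.mp}. But the exclusion of this final configuration is left as a heuristic, and the proposed mechanism fails as stated: the presence of a transverse homoclinic point, a horseshoe $K\ni\sigma$, and local product structure does \emph{not} prevent $\delta_{\sigma}$ from being a physical measure. The Colli--Vargas example cited in the introduction exhibits a hyperbolic saddle fixed point (hence with transverse homoclinic intersections and horseshoes) which is a statistical attractor for every point of a wandering domain with nonempty interior: orbits make homoclinic excursions of definite length and nevertheless spend an asymptotic proportion one of their time near $\sigma$, because successive sojourn times near the saddle can grow much faster than the excursion times. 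So no soft Fubini/product-structure argument can yield $m(\cB(\delta_{\sigma}))=0$; genericity must enter quantitatively. The paper does this through a chain of ideas your proposal does not touch: generically $|\det Df(\sigma)|<1$ (Proposition~\ref{p.alrozero} via Ara\'ujo's lemma, upgraded to a strict inequality by a Franks-lemma perturbation in Lemma~\ref{l.aldro}); far from sinks the class $H(\sigma)$ carries a dominated splitting $E\oplus F$ with $\det(Df(\sigma)|_{F})>1$ (Bochi--Bonatti, Lemma~\ref{l.bobo}); and the Catsigeras--Cerminara--Enrich entropy inequality on the Lyapunov stable set then gives $h_{\delta_{\sigma}}(f)>0$, absurd for a Dirac mass at a fixed point. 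Without a substitute for this entropy (or volume-expansion) argument, your plan cannot be completed.
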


Let us briefly comment on why we obtain only a partial answer. Our method of proof involves the existence a Lyapunov stable set containing the saddle, inside which we can perform an entropy estimation.
For this we use the theorems \cite{MP} and \cite{BC}, 
which require obtaining a residual set of points in the manifold which accumulates in the saddle. 
For obtaining this set, we use our denseness assumption. It goes without saying that it would very nice to obtain a full answer. However, this would require strong improvements of our techniques.

This note is structured in the following way. In Section~\ref{notations} we fix notations and give precise definitions of the objects apearing in this work. In Section~\ref{nomeruim}
we derive some consequences of the existence of a statistical attractor which is a fixed points (this uses an interesting, although elementary, result contained in \cite{ara}). 
In Section~\ref{generic} we collect some $C^1$ generic theorems and in Section~\ref{prova} we give the proof of Theorem~\ref{t.main}. 

\section*{Acknowledgments} I thank Pablo Guarino for many useful discussions and for his comments on a preliminary version of this paper. 

\section{Notations and definitions}\label{notations}

Let $M$ be a closed manifold of dimension $d$. We denote by $\dif$ the space of diffeomorphisms over $M$, endowed with the $C^1$ topology.
Given $f\in\dif$ and $x\in M$, the orbit of $x$ is the set $O(x)=\{f^n(x);n\in\Z\}$.

We denote by $\fix(f)$ the set of fixed points of $f$. Recall that a periodic point is an element $p\in\fix(f^n)$, for some integer $n>0$. The smalest of such $n$
is called the period of $p$, and is denoted by $\pi(p)$.

We denote by $m$ the normalised Lebesgue measure of $M$.

\subsection{The weak star topology}

$\cP(M)$ denotes the set of propability measures, endowed with the weak-star topology. Recall that, by compactness of $M$, we can give a metric generating this topology in
the following manner. Fix $\{\varphi_n\}_{n\in\N}$ a countable dense subset of $C^0_1(M)$, the space of continuous functions over $M$ bounded 
by $1$. Then, given $\mu,\nu\in\cP(M)$, putting 
$$d(\mu,\nu):=\sum_{n=1}^{\infty}2^{-n}\left|\int\varphi_nd\mu-\int\varphi_nd\nu\right|$$ 
defines a metric which gives the weak star topology. We denote by $\cP_f(M)$ the subset of $\cP(M)$ formed by measures which are invarinat under the element $f\in\dif$.

\subsection{Assimptotic measures}

In this paper any convergence of probability measures will be with respect to the weak star topology.

Given $x\in M$, consider the probability measures $\mu_k(x):=\frac{1}{n}\sum_{l=1}^{k-1}\delta_{f^l(x)}$, where $\delta_y$ is the Dirac mass concentrated in the point $y$.  
An \emph{assimptotic measure} of $x$ is an accumulation point of the sequence $\mu_k(x)$.
We denote by $\cM(x)$ the set of assimptotic measures of $x$. 

Consider now an invariant probability measure $\mu\in\cP_f(M)$. The \emph{statistical basin} of $\mu$ is the set 
$$\cB(\mu)=\left\{x\in M;\cM(x)=\{\mu\}\right\}.$$
We say that $\mu$ is a \emph{physical measure} if $m(\cB(\mu))>0$.

\subsection{Lyapunov exponents}

For $x\in M$ and $v\in T_xM\setminus\{0\}$, the \emph{Lyapunov exponent} of at $x$ in the direction of $v$ is 
$$\lambda(x,v):=\lim_{n\to\infty}\frac{1}{n}\log\|Df^n(x)v\|,$$
whenever the limit exists. By Oseledets' theorem given $\mu\in\cP_f(M)$ there exists a full measure set, called the set of \emph{regular points}, and measurable functions
$\chi_1\leq...\leq\chi_d:M\to\R$, such that given a regular point $x\in M$, for every $v\in T_xM\setminus\{0\}$ there exists $i$ such that 
$$\chi_i(x)=\lambda(x,v).$$
In the particular case $\mu=\delta_{\sigma}$, for some $\sigma\in\fix(f)$, the Lyapunov exponents are the logarithm of the modulus of the eigenvalues of $Df(\sigma)$. For details, see \cite{M}

\subsection{Homoclinic classes and dominated splittings}

Given a compact $\Lambda\subset M$, invariant under $f\in\dif$, we say that $\Lambda$ admits a \emph{dominated spliting} if there exists a decomposition of the tangent bundle
$T_{\Lambda}M=E\oplus F$, which is invariant under the derivative $Df$, and numbers $C>0$, $0<\lambda<1$ such that for every $x\in\Lambda$ and $n>0$ one has 
$$\|Df^n(x)|_E\|\|Df^{-n}(f^n(x))|_F\|\leq C\lambda^n.$$

In particular, given a hyperbolic periodic point $p$ (i.e no eigenvalue of $Df^{\pi(p)}(p)$ have moddulus $1$) its orbit admits a dominated splitting $E^s\oplus E^u$. 
The stable manifold theorem \cite{pdm} gives submanifolds $W^s(p)$ and $W^u(p)$, which are tangent to $E^s$ and $E^u$, respectively, at $p$. We denote 
$W^a(O(p))=\cup_{l=0}^{\pi(p)-1}W^a(f^l(p))$, $a=s,u$.

Given two hyperbolic periodic points $p$ and $q$ we say that they are \emph{homoclinically related} if $W^s(O(p))\tcap W^u(O(q))\neq\emptyset$ and
$W^u(O(p))\tcap W^s(O(q))\neq\emptyset$. The \emph{homoclinic class} of a periodic point $p$, denoted by $H(p)$, is the closure of the set of periodic points $q$, homoclinically
related with $p$.

We say that a hyperbolic saddle type periodic $p$ is \emph{dissipative} if $|\det{Df^{\pi(p)}}(p)|<1$

\subsection{Lyapunov stable sets}

A compact $\Lambda\subset M$, invariant under $f\in\dif$, is said to be \emph{Lyapunov stable} if for every neighborhood $U$ of $\Lambda$ it is possible to
find a neighborhood $V$ of $\Lambda$ such that if $x\in V\cap U$ then $f^n(x)\in U$, for every $n>1$. 

\section{Dirac physical measures}\label{nomeruim}

In this section we give two consequences of the existence of a Dirac physical measure.

\begin{proposition}
\label{p.alrozero}
For every $f\in\dif$ and every $\sigma\in\fix(f)$, if $\delta_{\sigma}$ is a physical measure then $|\det(Df(\sigma))|\leq 1$.
\end{proposition}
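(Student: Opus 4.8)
The plan is to argue by contradiction using a volume (Jacobian) estimate along orbits that spend a long time near $\sigma$. Suppose $|\det(Df(\sigma))|>1$. Since $\delta_\sigma$ is a physical measure, the set $\cB(\delta_\sigma)$ has positive Lebesgue measure; in particular it is nonempty, so pick $x\in\cB(\delta_\sigma)$ with $x\neq\sigma$ (a single orbit cannot carry positive Lebesgue mass, so such $x$ exists). Because $\cM(x)=\{\delta_\sigma\}$, the time averages $\mu_k(x)$ converge to $\delta_\sigma$, which forces the orbit of $x$ to spend asymptotically all of its time in any fixed neighborhood $U$ of $\sigma$: for every $\eps>0$, the proportion of iterates $l\in\{1,\dots,k-1\}$ with $f^l(x)\notin U$ tends to $0$ as $k\to\infty$.

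First I would fix a small neighborhood $U$ of $\sigma$ on which, by continuity of $x\mapsto\log|\det Df(x)|$ and the hypothesis $|\det Df(\sigma)|>1$, one has $\log|\det Df(y)|\geq \tau>0$ for all $y\in U$, for some constant $\tau$. Next, the key point is a pigeonhole/recurrence observation: since the orbit of $x$ returns to (and eventually stays mostly inside) $U$, and since $U$ is a bounded chart, one can extract long orbit segments of $x$ that begin and end in a compact subset of $U$ while remaining inside $U$ throughout — or alternatively, work with a full orbit segment $x,f(x),\dots,f^{n}(x)$ and split it into the times inside $U$ and the bounded number of excursions outside. Along an orbit segment that spends $n_{\mathrm{in}}$ steps in $U$ and $n_{\mathrm{out}}=n-n_{\mathrm{in}}$ steps outside, the chain rule gives
$$\log|\det Df^{n}(x)| \;=\; \sum_{l=0}^{n-1}\log|\det Df(f^l(x))| \;\geq\; \tau\, n_{\mathrm{in}} \;-\; C\, n_{\mathrm{out}},$$
where $C=\sup_M|\log|\det Df||<\infty$. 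Since $n_{\mathrm{out}}/n\to 0$, the right-hand side grows linearly, so $|\det Df^{n}(x)|\to\infty$ exponentially fast.

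The contradiction then comes from the fact that this forces local volume expansion that is incompatible with the orbit of $x$ staying in the bounded neighborhood $U$. Concretely, I would take a small ball $B$ around $x$ inside a chart and track $f^{n}(B)$: if $\dist(f^l(x),\sigma)$ stays small for most $l$, then one expects $f^{n}$ restricted near the orbit to be quantitatively close to the composition of the linear maps, so the volume of a small ball around $x$ gets multiplied by roughly $|\det Df^{n}(x)|\to\infty$, while its image must fit inside the bounded set $U$ (up to the bounded excursions) — a contradiction for $n$ large. The main obstacle, and the place where care is needed, is controlling the nonlinearity: one cannot literally replace $Df^n$ near the orbit by the product of derivatives at the orbit points, so I would instead invoke the elementary result from \cite{ara} cited in the introduction (the section is advertised as using "an interesting, although elementary, result contained in \cite{ara}"), which presumably packages exactly this "a physical measure at a fixed point must be Jacobian-nonexpanding" estimate, or prove the needed distortion bound directly on the chart using that the orbit segment stays in the fixed small neighborhood $U$ for a definite proportion of time and that the bounded excursions contribute only a bounded multiplicative constant.
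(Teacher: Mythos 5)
The first half of your argument is sound and close in spirit to part of the paper's proof: for $x\in\cB(\delta_{\sigma})$ the time averages $\frac1n\log|\det Df^n(x)|=\frac1n\sum_{l=0}^{n-1}\log|\det Df(f^l(x))|$ converge to $\int\log|\det Df|\,d\delta_{\sigma}=\log|\det Df(\sigma)|$, so if $|\det Df(\sigma)|>1$ then $|\det Df^n(x)|$ grows exponentially. The gap is in how you try to turn this into a contradiction. Exponential growth of the Jacobian along a \emph{single} orbit is not contradictory: take a linear saddle in the plane with eigenvalues $1/2$ and $4$; every point of the stable manifold has $\cM(x)=\{\delta_{\sigma}\}$ and $|\det Df^n(x)|=2^n\to\infty$, and nothing goes wrong. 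This also shows your proof cannot be repaired as stated, because after the first paragraph you only use that $\cB(\delta_{\sigma})$ is nonempty, and the statement is false under that weaker hypothesis. Your proposed "volume of a small ball" contradiction does not close: the image $f^n(B)$ of a ball around $x$ has no reason to stay inside $U$ (only the orbit of the single point $x$ does; the ball is stretched out along the expanding directions and leaves $U$), and in the $C^1$ category there is no distortion control that would let you compare $m(f^n(B))$ with $|\det Df^n(x)|\,m(B)$ on a ball of fixed size.

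The correct use of the hypothesis $m(\cB(\delta_{\sigma}))>0$ is global and measure-theoretic, and this is exactly what the cited result of Ara\'ujo provides --- but it is not a distortion bound. Since $\int_M|\det Df^n|\,dm=m(M)=1$ by the change of variables formula, Chebyshev gives $m\{x:|\det Df^n(x)|\geq(1+\delta)^n\}\leq(1+\delta)^{-n}$, and Borel--Cantelli then shows that Lebesgue-a.e.\ point has $|\det Df^n(x)|<(1+\delta)^n$ for all large $n$. Because the basin has \emph{positive} measure, it must contain such a point $x$, and for that point your own time-average computation runs in the opposite direction: $\log|\det Df(\sigma)|=\lim_n\frac1n\log|\det Df^n(x)|\leq\log(1+\delta)$ for every $\delta>0$, whence $|\det Df(\sigma)|\leq1$. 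In short: you should not try to contradict Jacobian growth at a chosen point; you should use positivity of the measure of the basin to choose a point at which the Jacobian provably does not grow.
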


The proof of this proposition needs a result contaned in Ara\'ujo's thesis \cite{ara}. We state and prove this result here for the sake of completeness. 
Take a real number $\delta>0$ and consider the set $\Lambda(\delta,f)$ of points $x\in M$ for which there exists a positive integer $N$ such that
$n\geq N$ implies $|\det{Df^n(x)}|<(1+\delta)^n$. The result of Ara\'ujo's thesis is the following (see also Lemma 2 in \cite{amseu} for a flow version):

\begin{lemma}
 \label{l.ara}
For every $\delta>0$ one has 
\begin{enumerate}
 \item $m\left(\Lambda(\delta,f)\right)=1$. 
 \item if $x\in\Lambda(\delta,f)$ and $\mu\in\cM(x)$ then $\int\log|\det{Df}|d\mu\leq\delta$
\end{enumerate}
\end{lemma}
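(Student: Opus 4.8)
The plan is to reduce the whole statement to the single continuous function $\psi:=\log|\det Df|$ on the compact manifold $M$, which is bounded since $M$ is compact, together with two elementary identities. First, the chain rule gives the cocycle identity
\[
\log|\det Df^{n}(x)|=\sum_{l=0}^{n-1}\psi(f^{l}(x)),
\]
so that, by definition, $x\in\Lambda(\delta,f)$ precisely when the Birkhoff sums $\frac1n\sum_{l=0}^{n-1}\psi(f^{l}(x))$ are eventually strictly smaller than $\log(1+\delta)$. Second, the change of variables formula applied to the diffeomorphism $f^{n}$, evaluated on the constant function $1$, gives $\int_{M}|\det Df^{n}|\,dm=m(M)=1$ for every $n\geq 1$. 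This last bound, uniform in $n$, is the only place where one must cope with the fact that $m$ is not $f$-invariant, and it does so harmlessly.

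For item (1) I would combine Markov's inequality with the Borel--Cantelli lemma. From $\int_{M}|\det Df^{n}|\,dm=1$ and Markov's inequality one gets
\[
m\bigl(\{x\in M:|\det Df^{n}(x)|\geq(1+\delta)^{n}\}\bigr)\leq(1+\delta)^{-n}.
\]
Since $\delta>0$, the series $\sum_{n\geq 1}(1+\delta)^{-n}$ converges, so Borel--Cantelli tells us that the set of points $x$ satisfying $|\det Df^{n}(x)|\geq(1+\delta)^{n}$ for infinitely many $n$ has zero $m$-measure. As this set is exactly the complement of $\Lambda(\delta,f)$ (the negation of ``eventually $<(1+\delta)^n$'' is ``$\geq(1+\delta)^n$ infinitely often''), we conclude $m(\Lambda(\delta,f))=1$.

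For item (2), fix $x\in\Lambda(\delta,f)$ and $\mu\in\cM(x)$, and choose a subsequence $k_{j}\to\infty$ with $\mu_{k_{j}}(x)\to\mu$ in the weak-star topology. Using the cocycle identity and the definition of $\mu_{k}(x)$ one writes
\[
\frac{1}{k_{j}}\log|\det Df^{k_{j}}(x)|=\frac{\psi(x)}{k_{j}}+\int\psi\,d\mu_{k_{j}}(x).
\]
As $j\to\infty$ the first term on the right tends to $0$ because $\psi$ is bounded, and the second term tends to $\int\psi\,d\mu$ because $\psi$ is continuous and $\mu_{k_{j}}(x)\to\mu$ weakly; hence the left-hand side converges to $\int\psi\,d\mu$. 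On the other hand, since $x\in\Lambda(\delta,f)$, for all $j$ large enough the left-hand side is $<\log(1+\delta)$. Passing to the limit and using $\log(1+\delta)\leq\delta$ yields $\int\log|\det Df|\,d\mu=\int\psi\,d\mu\leq\delta$, as claimed.

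The statement is elementary and I do not anticipate a genuine obstacle; the only points requiring a little care are the identification of the complement of $\Lambda(\delta,f)$ with the ``infinitely often'' event feeding Borel--Cantelli, and the harmless boundary term $\psi(x)/k_{j}$ (together with the off-by-one in the definition of $\mu_{k}(x)$) when relating the Birkhoff average to $\int\psi\,d\mu_{k_{j}}(x)$.
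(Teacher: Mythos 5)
Your proof is correct and follows essentially the same route as the paper: change of variables plus a Markov/Chebyshev bound to control $m(\Gamma_n)$, Borel--Cantelli for item (1), and the cocycle identity for $\log|\det Df^n|$ combined with weak-star convergence of the empirical measures for item (2). The only difference is that you explicitly track the harmless boundary term coming from the off-by-one in the definition of $\mu_k(x)$, which the paper glosses over.
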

\begin{proof}
For each positive integer $n$ consider the set 
$$\Gamma_n=\left\{x\in M;|\det{Df^n(x)}|\geq(1+\delta)^n\right\}.$$
Since $f^n$ is a diffeomorphism, one can apply the change of variable formula to the unity constant function and obtain
$$1=\int_Mdm=\int_M|\det{Df^n}|dm\geq\int_{\Gamma_n}|\det{Df^n(x)}|dm\geq(1+\delta)^nm\left(\Gamma_n\right),$$
thus 
$$m\left(\Gamma_n\right)\leq\frac{1}{(1+\delta)^n}.$$
Since, by definition, $\Lambda(\delta,f)=\cup_{N\in\N}\cap_{n\geq N}(M\setminus\Gamma_n)$, Borel-Canteli's theorem implies that $m\left(\Lambda(\delta,f)\right)=1$. 

Now, pick a point $x\in\Lambda(\delta,f)$ and a measure $\mu\in\cM(x)$. This means that there exists a sequence of positive integers $n_i\to+\infty$ such that
$\mu=\lim_{i\to\infty}\frac{1}{n_i}\sum_{l=1}^{n_i-1}\delta_{f^l(x)}.$ As $x\in\Lambda(\delta,f)$, for $i$ sufficiently large we have that
$$\frac{1}{n_i}\sum_{l=1}^{n_i-1}\log{|\det{Df(f^l(x))}|}=\frac{1}{n_i}\log{|\det{Df^{n_i}(x)}|}<\log(1+\delta),$$
and therefore
$$\int\log|\det{Df}|d\mu\leq\log(1+\delta)\leq\delta.$$
\end{proof}

\begin{proof}[Proof of Proposition~\ref{p.alrozero}]
Consider $\Lambda(f):=\cap_{n\in\N}\Lambda(1/n,f)$. By Lemma~\ref{l.ara} one has that $m(\Lambda(f))=1$ and if $x\in\Lambda(f)$ and $\mu\in\cM(x)$ then 
$\int\log|\det{Df}|d\mu\leq 1/n$, for every $n\in\N$. 

Now, since $\cB(\delta_{\sigma})$ has positive measure, there exists a point $x\in\cB(\delta_{\sigma})\cap\Lambda(f)$. This implies that 
$$\log|\det{Df(\sigma)}|=\int\log|\det{Df}|d\delta_{\sigma}\leq 0,$$
which concludes the proof.
\end{proof}

Below, we use our denseness assumption to construct a residual set of points which accumulates in the fixed point. The key is the following general statement.

\begin{lemma}
\label{l.baciafraca}
Let $f:M\to M$ be a continuous map. Assume that there exists $\mu\in\cP_f(M)$ such that $\cB(\mu)$ is dense. 
Then, $\{x\in M;\mu\in\cM(x)\}$ is a residual subset of $M$. 
\end{lemma}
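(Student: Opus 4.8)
The plan is to show that the set $G = \{x \in M : \mu \in \cM(x)\}$ is a dense $G_\delta$ subset of $M$. Density is immediate, since $\cB(\mu) \subseteq G$ and $\cB(\mu)$ is dense by hypothesis. So the whole content is to write $G$ as a countable intersection of open sets. The natural way to do this is to exploit the metric $d$ on $\cP(M)$ recalled in the excerpt: $\mu \in \cM(x)$ means precisely that the sequence $\mu_k(x) = \frac1k\sum_{l=1}^{k-1}\delta_{f^l(x)}$ has $\mu$ as an accumulation point, i.e. $\liminf_{k\to\infty} d(\mu_k(x),\mu) = 0$, i.e. for every $j \in \N$ and every $N \in \N$ there exists $k \geq N$ with $d(\mu_k(x),\mu) < 1/j$. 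Hence
$$
G = \bigcap_{j\in\N} \bigcap_{N\in\N} \bigcup_{k\geq N} \left\{ x \in M : d(\mu_k(x),\mu) < \tfrac1j \right\}.
$$

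The key step is then to check that for each fixed $k$ the set $U_{k,j} = \{x : d(\mu_k(x),\mu) < 1/j\}$ is open, from which $\bigcup_{k\geq N} U_{k,j}$ is open and $G$ is $G_\delta$. This follows because $x \mapsto \mu_k(x)$ is continuous from $M$ to $\cP(M)$: indeed, for each test function $\varphi_n$ from the fixed dense family in $C^0_1(M)$, the map $x \mapsto \int \varphi_n \, d\mu_k(x) = \frac1k\sum_{l=1}^{k-1}\varphi_n(f^l(x))$ is continuous (finite sum of continuous functions, using continuity of $f$ and of $\varphi_n$); by the explicit formula for $d$ together with a dominated-convergence / tail-estimate argument on the series $\sum 2^{-n}|\cdots|$, continuity of all the coordinate functions $x\mapsto \int\varphi_n\,d\mu_k(x)$ upgrades to continuity of $x \mapsto \mu_k(x)$ into $(\cP(M),d)$. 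Then $x \mapsto d(\mu_k(x),\mu)$ is continuous as a composition, so its sublevel set $U_{k,j}$ is open.

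Putting these together: $G$ is a countable intersection of the open sets $\bigcup_{k\geq N} U_{k,j}$, hence $G_\delta$; it contains the dense set $\cB(\mu)$, hence is dense; therefore $G$ is residual in $M$, which is the claim.

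I do not expect any serious obstacle here — the only point requiring a little care is the passage from continuity of each scalar map $x\mapsto\int\varphi_n\,d\mu_k(x)$ to continuity of $x\mapsto\mu_k(x)$ with respect to $d$, where one truncates the series at a large index to control the tail and then uses continuity of the finitely many remaining terms. Everything else is bookkeeping with the definition of accumulation point. One should also note that this lemma makes no use of differentiability, only of continuity of $f$ and compactness of $M$ (the latter being what makes $d$ a genuine metric for the weak-$*$ topology), which matches the generality in which it is stated.
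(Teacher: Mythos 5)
Your proof is correct and follows essentially the same route as the paper: the identical decomposition of $\{x: \mu\in\cM(x)\}$ as $\bigcap_j\bigcap_N\bigcup_{k\geq N}\{x: d(\mu_k(x),\mu)<1/j\}$, openness of each piece via continuity of $x\mapsto\mu_k(x)$, and density from the inclusion $\cB(\mu)\subseteq\{x:\mu\in\cM(x)\}$. The only difference is that you spell out the continuity/tail-estimate step that the paper dismisses with ``one easily sees.''
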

\begin{proof}
Notice that $\{x\in M;\mu\in\cM(x)\}$ can be writen as the countable intersection 
$$\bigcap_{n\in\N}\bigcap_{m\in\N}\bigcup_{k\geq m}B(n,k),$$
where $B(n,k)=\left\{x\in M;d(\mu_k(x),\mu)<\frac{1}{n}\right\}$. Moreover, one easily sees that each $B(n,k)$ is an open subset of $M$. Since, 
by assumption, the set $\{x\in M;\mu\in\cM(x)\}$ already contains a dense subset of $M$, the result follows.
\end{proof}

As an immediate consequence we get.

\begin{corollary}
 \label{c.acumula}
For every $f\in\dif$ and every $\sigma\in\fix(f)$, if $\cB(\delta_{\sigma})$ is dense then there exists a residual set $L_f\subset M$ such that 
if $x\in L_f$ then $\sigma\in\omega(x)$.
\end{corollary}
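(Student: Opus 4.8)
The plan is to obtain this immediately from Lemma~\ref{l.baciafraca}. First I would note that $f\in\dif$ is in particular a continuous self-map of $M$ and that, by hypothesis, $\cB(\delta_{\sigma})$ is dense; hence Lemma~\ref{l.baciafraca} applies with $\mu=\delta_{\sigma}$ and tells us that
$$L_f:=\{x\in M;\delta_{\sigma}\in\cM(x)\}$$
is a residual subset of $M$. It then only remains to verify that each $x\in L_f$ has $\sigma\in\omega(x)$.

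So I would fix $x\in L_f$, which by definition gives a sequence of integers $n_i\to+\infty$ with $\mu_{n_i}(x)\to\delta_{\sigma}$ in the weak-star topology, and argue by contradiction. If $\sigma\notin\omega(x)$, there is an open neighbourhood $U$ of $\sigma$ and an integer $N$ with $f^l(x)\notin U$ for every $l\geq N$. Picking a continuous $\varphi:M\to[0,1]$ with $\varphi(\sigma)=1$ and $\supp\varphi\subset U$, one has, for $n_i>N$,
$$\int\varphi\,d\mu_{n_i}(x)=\frac{1}{n_i}\sum_{l=1}^{n_i-1}\varphi(f^l(x))\leq\frac{N}{n_i}\longrightarrow 0,$$
whereas $\int\varphi\,d\delta_{\sigma}=\varphi(\sigma)=1$; this contradicts $\mu_{n_i}(x)\to\delta_{\sigma}$. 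Hence $\sigma\in\omega(x)$, and $L_f$ is the required residual set.

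The whole argument is routine: the substance is entirely packaged in Lemma~\ref{l.baciafraca}, and the only mildly delicate point is the passage from weak-star convergence of the empirical measures to the topological statement $\sigma\in\omega(x)$, which the bump-function test above settles. I do not anticipate any genuine obstacle here.
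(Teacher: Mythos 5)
Your proof is correct and follows exactly the route the paper intends (the paper states the corollary as an immediate consequence of Lemma~\ref{l.baciafraca} without written proof): apply the lemma with $\mu=\delta_{\sigma}$ and observe that $\delta_{\sigma}\in\cM(x)$ forces $\sigma\in\omega(x)$. The bump-function verification of that last implication is the right and standard argument, and there is no gap.
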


\section{$C^1$ generic results}\label{generic}

In this section we collect some $C^1$-generic results that we shall combine togehter to obtain our main theorem.

\subsection{Dominated splitting far sinks}

The lemma below is a consequence of arguments of R. Potrie \cite{potrie}, applied here in a simpler situation.

\begin{lemma}
\label{l.bobo}
There exists a residual subset $\cR_1\subset\dif$ such that if $f\in\cR_1$ has no sinks then
for every dissipative $\sigma\in\fix(f)$
its homoclinic class $H(\sigma)$ admits a dominated splitting $E\oplus F$ such that $\det(Df(\sigma)|_{F})>1$.
\end{lemma}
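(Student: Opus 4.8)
The plan is to obtain the dominated splitting by combining two $C^1$-generic mechanisms: a weak form of Ma\~n\'e--Pujals--Sambarino / Bonatti--Gan--Wen type results on homoclinic classes of generic diffeomorphisms, and the $C^1$-generic continuity/stability properties of homoclinic classes (Bonatti--Crovisier, Carballo--Morales--Pacifico). First I would fix the residual set $\cR_1$ to consist of Kupka--Smale diffeomorphisms at which homoclinic classes vary lower-semicontinuously, every periodic point lies in the homoclinic class of its continuation, and the ``ergodic closing'' properties hold. For such $f$ with no sinks, take a dissipative saddle $\sigma\in\fix(f)$ and consider $H(\sigma)$. The key point, following Potrie \cite{potrie}, is that absence of sinks forbids small perturbations from creating a sink inside $H(\sigma)$; by Hayashi's connecting lemma and Franks' lemma, if $H(\sigma)$ had no dominated splitting of the appropriate index then one could $C^1$-perturb $f$ to turn $\sigma$ (or a periodic point homoclinically related to it, with Jacobian close to that of $\sigma$) into a sink, contradicting genericity of ``no sinks'' being an open condition restricted along a residual set. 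This forces $H(\sigma)$ to admit a dominated splitting $E\oplus F$ whose index matches the stable index of $\sigma$.

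Next I would verify that the candidate bundle $F$ is the one detecting volume expansion at $\sigma$. Since $\sigma\in H(\sigma)$ and the dominated splitting is continuous and $Df$-invariant on $H(\sigma)$, at $\sigma$ it refines (is coarser than) the Oseledets splitting: $E_\sigma$ and $F_\sigma$ are sums of eigenspaces of $Df(\sigma)$, with $E_\sigma$ containing the strongly contracting directions and $F_\sigma$ the rest. The equality $\det(Df(\sigma))=\det(Df(\sigma)|_{E_\sigma})\cdot\det(Df(\sigma)|_{F_\sigma})$ together with $|\det Df(\sigma)|<1$ (dissipativity) does not by itself give $\det(Df(\sigma)|_{F})>1$, so the real content is to choose the \emph{index} of the splitting correctly: I would take $E$ to be the maximal uniformly contracted sub-bundle provided by the dominated splitting, so that $F$ carries no eigenvalue of modulus $<1-\epsilon$; combined with the saddle having at least one expanding eigenvalue, $\det(Df(\sigma)|_{F})>1$ follows after checking that $F_\sigma$ indeed contains all the modulus-$\ge 1$ eigenvalues of $Df(\sigma)$. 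This is where dissipativity and the saddle (non-sink) hypothesis enter: $\sigma$ has a contracting direction going into $E$ and an expanding one staying in $F$, and the product of the moduli of the $F$-eigenvalues exceeds $1$ precisely because the ``missing'' contraction has been absorbed into $E$.

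The main obstacle I anticipate is the last bookkeeping step: ensuring the dominated splitting delivered by the generic argument has exactly the index for which $F_\sigma$ captures the volume-expanding part of $Df(\sigma)$, rather than some intermediate index for which $\det(Df(\sigma)|_{F})$ could be $\le 1$. Handling this requires running Potrie's argument not for an arbitrary dominated splitting but for the finest one, or equivalently for the sub-bundle $E$ defined as the sum of the uniformly contracted Oseledets-type directions over $H(\sigma)$; one must then invoke the $C^1$-generic result that this sub-bundle is dominated (no sinks $\Rightarrow$ uniform contraction on $E$ with a dominated complement), which is the genuinely nontrivial input borrowed from \cite{potrie} and the Ma\~n\'e--Pujals--Sambarino circle of ideas. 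The remaining steps --- continuity of the splitting, $\sigma\in H(\sigma)$, and the determinant factorization --- are routine.
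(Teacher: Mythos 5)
Your first half is in the right spirit --- the paper also fixes $\cR_1$ by semicontinuity of $f\mapsto\overline{\sink(f)}$, so that for $f\in\cR_1$ without sinks no nearby $g$ has a sink, and then derives the dominated splitting by showing that its failure would allow a perturbation creating a sink. But there are two genuine gaps. First, neither Franks' lemma applied at $\sigma$ nor the connecting lemma suffices to run this mechanism: since $\sigma$ is a fixed point, one cannot mix its eigenvalues by a Franks-type perturbation, and to contradict ``no sinks nearby'' one needs a sequence of periodic orbits $\gamma_n=O(p_n)$ homoclinically related to $\sigma$, \emph{dissipative}, Hausdorff-converging to $H(\sigma)$, and with $\pi(p_n)\to\infty$ (Lemma 1.10 of \cite{BDP}, via shadowing orbits that spend most of their time near $\sigma$); the perturbation tool is then the Bochi--Bonatti theorem \cite{bobo} on equalizing the eigenvalues inside each bundle of the finest dominated splitting along such long orbits. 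Without the density and unbounded periods of these orbits you get neither the sink nor a splitting over all of $H(\sigma)$.

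Second, and more seriously, your argument for $\det(Df(\sigma)|_F)>1$ does not work. This inequality is \emph{not} a consequence of the local eigenvalue structure of the splitting at $\sigma$: a dominated splitting only guarantees a uniform gap between consecutive bundles, with no control on where that gap sits relative to modulus $1$, so even the finest splitting may place weakly contracting eigenvalues of $Df(\sigma)$ inside $F$, and their product with the expanding ones can perfectly well be $\le 1$ (e.g.\ eigenvalues $1.01$ and several of modulus $0.99$ in $F$). Your proposed fix --- taking $E$ to be the maximal uniformly contracted sub-bundle --- does not repair this, because directions that are non-uniformly contracted over $H(\sigma)$ may still be contracted at $\sigma$. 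The paper's proof of this claim is again perturbative: if $\det(Df(\sigma)|_F)\le 1$, the same dissipative periodic orbits satisfy $\det(Df^{\pi(p_n)}(p_n)|_F)<1+\eps$, Bochi--Bonatti makes all $F$-eigenvalues of modulus $\le 1+\eps$, domination bounds the remaining eigenvalues, and Lemma~\ref{l.frank} (Franks \cite{Franks}) then produces a sink in $\cU$, a contradiction. You would need to add this second sink-creation argument; the determinant inequality cannot be obtained by bookkeeping at $\sigma$ alone.
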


\subsubsection{Tools}

The lemma below says that one can change, by a small perturbation, one of the eigenvalues of the derivative at a periodic point while keeping the others fixed. 
It will be used several times in the sequel.
Since it is an easy application of Frank's lemma \cite{Franks}, we omit the proof.  

\begin{lemma}
 \label{l.frank}
Let $p$ be a periodic point of $f\in\dif$. Take $\cU$ a neighborhood of $f$ and $U$ a neighborhood of $O(p)$. Consider 
$$0<\lambda_1<...<\lambda_k$$
the set of modulus of eigenvalues of $Df^{\pi(p)}(p)$. Then, there exists $\eps>0$ such that for every $i=1,...,k$ and every
$\lambda^*\in(\lambda_i-\eps,\lambda_i+\eps)$ there exists $g\in\cU$, which coincides with $f$ over $(M\setminus U)\cup O(p)$
and such that the set of modulus of eigenvalues of $Dg^{\pi(p)}(p)$ is
$$\lambda_1<...<\lambda_{i-1}<\lambda^*<\lambda_{i+1}<...<\lambda_k.$$
\end{lemma}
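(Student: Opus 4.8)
To prove Lemma~\ref{l.frank} the plan is to build, along the orbit of $p$, a perturbed linear cocycle realizing the prescribed eigenvalue moduli, and then to pass from that cocycle to an actual diffeomorphism by Franks' lemma \cite{Franks}. Write $n=\pi(p)$, $p_j=f^j(p)$ for $0\le j\le n-1$, and $B:=Df^{n}(p)\colon T_pM\to T_pM$, whose eigenvalue moduli are $\lambda_1<\dots<\lambda_k$. First I would invoke Franks' lemma to produce $\eps_0>0$ with the property that, whenever linear maps $A_j\colon T_{p_j}M\to T_{p_{j+1}}M$ satisfy $\|A_j-Df(p_j)\|<\eps_0$ for every $0\le j\le n-1$, there exists $g\in\cU$ that coincides with $f$ on $O(p)\cup(M\setminus U)$ and has $Dg(p_j)=A_j$ for all $j$; in particular $O(p)$ is still a $g$-periodic orbit of period $n$, and $Dg^{n}(p)=A_{n-1}\cdots A_0$.

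The next step is purely linear-algebraic. I would decompose $T_pM=V_1\oplus\dots\oplus V_k$ into the real $B$-invariant subspaces obtained by grouping the generalized eigenspaces of $B$ according to the modulus of the corresponding eigenvalue, so that every eigenvalue of $B|_{V_i}$ has modulus $\lambda_i$ and $V_i\neq\{0\}$; this splitting depends only on $f$ and $p$. Given $i\in\{1,\dots,k\}$ and a real number $\lambda^*>0$, let $H=H_{i,\lambda^*}$ be the automorphism of $T_pM$ acting as the scalar $\lambda^*/\lambda_i$ on $V_i$ and as the identity on $V_j$ for $j\neq i$. Then $HB$ respects the splitting, with $HB|_{V_j}=B|_{V_j}$ for $j\neq i$ and $HB|_{V_i}=(\lambda^*/\lambda_i)\,B|_{V_i}$; hence the set of eigenvalue moduli of $HB$ is precisely $\{\lambda_1,\dots,\lambda_{i-1},\lambda^*,\lambda_{i+1},\dots,\lambda_k\}$, and $\|H-\mathrm{Id}\|\le C\,|\lambda^*-\lambda_i|$ for a constant $C$ depending only on the splitting.

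Finally I would fix $\eps>0$ once and for all, smaller than $\lambda_{j+1}-\lambda_j$ for every $j$ (using the conventions $\lambda_0=0$ and $\lambda_{k+1}=+\infty$) and small enough that $C\,\eps\,\|Df(p_{n-1})\|<\eps_0$; this $\eps$ depends only on $f$, $p$, $\cU$ and $U$. Then, for any $i$ and any $\lambda^*\in(\lambda_i-\eps,\lambda_i+\eps)$, one has $\lambda_{i-1}<\lambda^*<\lambda_{i+1}$, in particular $\lambda^*>0$; setting $A_{n-1}:=H_{i,\lambda^*}\circ Df(p_{n-1})$ and $A_j:=Df(p_j)$ for $0\le j\le n-2$ gives $\|A_{n-1}-Df(p_{n-1})\|=\|(H-\mathrm{Id})\,Df(p_{n-1})\|<\eps_0$ and $A_{n-1}\cdots A_0=H\cdot B$. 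Applying Franks' lemma to this cocycle then yields $g\in\cU$ coinciding with $f$ on $O(p)\cup(M\setminus U)$ with $Dg^{\pi(p)}(p)=HB$, whose eigenvalue moduli are $\lambda_1<\dots<\lambda_{i-1}<\lambda^*<\lambda_{i+1}<\dots<\lambda_k$, as claimed.

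The only step carrying genuine content, as opposed to bookkeeping, is the linear-algebra observation that one may rescale a single modulus of $B$ without disturbing the others --- which is exactly what the modulus-grouped generalized eigenspace decomposition affords --- together with keeping the norm of the scalar perturbation $H$ within the uniform constant $\eps_0$ furnished by Franks' lemma. Since no analytic difficulty arises, it is reasonable that the text omits the argument.
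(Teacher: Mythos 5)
Your proof is correct and is precisely the argument the paper omits: the paper states that the lemma is ``an easy application of Franks' lemma'' and gives no further details, and your modulus-grouped generalized eigenspace decomposition together with the scalar rescaling $H$ composed into the last map of the cocycle is the standard way to carry this out. The uniformity of $\eps$ over $i$ and $\lambda^*$ is also handled correctly.
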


The second tool we shall use enables us to obtain the dominated splitting when we are far from sinks. It  is Theorem 2 in \cite{bobo}.

\begin{theorem}[Bochi-Bonatti]
\label{t.bobo}
Let $f$ be a diffeomorphism of a $d$-dimensional compact manifold, and $\gamma_n=O(p_n)$ be a sequence of periodic orbits whose periods 
$\pi(p_n)$ tends to infinity and that converges in the Hausdorff topology to a compact set $\Lambda$. Let 
$$E_1\oplus...\oplus E_m$$
be the finest dominated splitting over $\Lambda$. Then, given $\eps>0$ there is $N$ such that for every $n\geq N$ there exists a 
$\eps$-$C^1$-perturbation $g$ of $f$ with support in an arbitralily small neighborhood of $\gamma_n$, preserving the orbit $\gamma_n$,
and such that, for every $i=1,...,m$, all the eigenvalues of $Dg^{\pi(p_n)}(p_n)$ in the subspace $E_i$ are equal to 
$\sqrt[\dim(E_i)]{\det(Df^{\pi(p_n)}(p_n)|_{E_i})}.$
\end{theorem}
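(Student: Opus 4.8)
The plan is to reduce the statement to a blockwise linear-algebra perturbation performed along the long orbit $\gamma_n$, and then to realize that perturbation as a genuine $C^1$ diffeomorphism by Frank's lemma \cite{Franks}.

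First I would upgrade the dominated splitting over $\Lambda$ to one over $\gamma_n$. A dominated splitting over a compact invariant set extends, with slightly weaker constants, to a $Df$-invariant dominated splitting over an entire neighborhood of the set. Since $\pi(p_n)\to\infty$ forces $\gamma_n$ into any prescribed neighborhood of $\Lambda$ for $n$ large, the finest splitting $E_1\oplus\cdots\oplus E_m$ induces, for all large $n$, an invariant dominated splitting $E_1^n\oplus\cdots\oplus E_m^n$ along $\gamma_n$ with $\dim E_i^n=\dim E_i$ and uniform constants. Writing $x_j=f^j(p_n)$, all perturbations will be made block-diagonal with respect to this splitting; such perturbations preserve each subbundle, so the inter-block domination (being uniform and inherited from $\Lambda$) survives small perturbations, and it suffices to treat one block at a time.

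Fix a block, set $k=\dim E_i$, and write the return map on the block as the product $A=M_N\cdots M_1$ of the one-step derivatives $M_j=Df(x_j)|_{E_i^n}$, where $N=\pi(p_n)$. The target is to find linear maps $L_j\in SL(E_i^n|_{x_j})$ with $\|L_j-\mathrm{Id}\|<\eps$ such that the perturbed product $L_NM_N\cdots L_1M_1$ has all its eigenvalues of modulus $\rho:=|\det A|^{1/k}$; imposing $\det L_j=1$ keeps $\det A$, hence $\rho$, unchanged, so $\rho$ is exactly the stated root $\sqrt[\dim E_i]{\det(Df^{\pi(p_n)}(p_n)|_{E_i})}$. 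The heart of the argument is that this collapse of the spectrum can be achieved with per-step cost tending to $0$ as $N\to\infty$, and this is precisely where the finest-splitting hypothesis enters: over $\Lambda$ the block $E_i$ admits no proper dominated splitting, so along $\gamma_n$ there is no uniform exponential separation between any two of the interior singular-value directions. One can therefore locate, along the orbit, \emph{mixing times} at which consecutive growth directions are comparable and can be rotated into one another at bounded cost; the total rotation needed to equalize the $k$ growth rates to their common geometric mean is bounded independently of $n$, and distributing it over the $N\to\infty$ steps makes each individual $L_j$ as close to the identity as desired.

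Finally I would convert the cocycle perturbation into a diffeomorphism. By Frank's lemma \cite{Franks}, each composition $L_j\circ Df(x_j)$ is realized as $Dg(x_j)$ for a diffeomorphism $g$ that is $\eps$-$C^1$-close to $f$, coincides with $f$ outside an arbitrarily small neighborhood of $\gamma_n$ and on $\gamma_n$ itself, so that $\gamma_n$ remains a $g$-periodic orbit. Because the $L_j$ are block-diagonal, $g$ preserves each $E_i^n$, and $Dg^{\pi(p_n)}(p_n)|_{E_i}$ equals the equalized product, whose eigenvalues are all of modulus $\rho$. Carrying this out simultaneously on every block produces the desired $g$. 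The main obstacle is the quantitative step inside the block: converting ``no dominated splitting over $\Lambda$'' into a uniform bound, valid along the whole converging sequence $\gamma_n$, on the cost of the rotations needed to equalize the spectrum. Controlling the angles between the relevant directions uniformly in $n$ — so that the per-step perturbation is genuinely $o(1)$ — is the technical core and the part demanding the most care.
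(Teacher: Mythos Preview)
The paper does not prove this theorem: it is quoted as a tool, explicitly as ``Theorem~2 in \cite{bobo}'', and no argument is given. So there is no proof in the paper for your sketch to be compared against.

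That said, your outline is broadly faithful to the actual Bochi--Bonatti strategy: extend the finest dominated splitting from $\Lambda$ to the nearby long orbits $\gamma_n$, work block by block, exploit the absence of any finer domination inside each $E_i$ to find many times along the orbit at which adjacent Oseledets directions can be mixed by small rotations, and then realize the resulting cocycle perturbation via Franks' lemma. Two small points are worth flagging. First, the statement asserts that the eigenvalues in $E_i$ are \emph{equal} to the indicated root, not merely of the same modulus; your sketch only arranges equal moduli, so an additional (easy) step is needed to make the block a homothety, not just a map with constant spectral radius. Second, your claim that ``the total rotation needed \dots\ is bounded independently of $n$'' is not quite the right bookkeeping: what one actually uses is that the lack of domination guarantees a positive \emph{density} of mixing times along $\gamma_n$, so that the number of available small perturbations grows linearly with $\pi(p_n)$ while the total amount of mixing required grows at most linearly as well; this is what drives the per-step cost to zero. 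You correctly identify this quantitative step as the technical core.
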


\subsubsection{Proof of Lemma~\ref{l.bobo}}

By a classical Baire argument\footnote{See for instance \cite{semi}.}, since the map $f\mapsto\overline{\sink(f)}$ is lower semicontinuous, there exists
a resudual subset $\cR_1\subset\dif$ such that if $f\in\cR_1$ has no sinks then there exists a neighborhood 
$\cU$ of $f$ in $\dif$ such that no diffeomorphism $g\in\cU$ has sinks. 

We fix $f\in\cR_1$ and let $\sigma\in\fix(f)$ be a dissipative saddle. Then $\det(Df(\sigma))<1$. If $H(\sigma)=\{\sigma\}$ the existence 
of a dominated splitting is automatic, therefore we can assume that $H(\sigma)\neq\{\sigma\}$. 

Assume by contradiction that $H(\sigma)$ has no dominated splitting.
We shall apply Theorem \ref{t.bobo} to find a contradiction. 

\begin{claim}
There exists a sequence
$\gamma_n=O(p_n)$ of periodic orbits of $f$ having the following properties
\begin{itemize}
 \item each $p_n$ is homoclinically related with $\sigma$
 \item $\gamma_n$ converges in the Hausdorff topology to $H(\sigma)$
 \item $\pi(p_n)\to\infty$
 \item  $\det(Df^{\pi(p_n)}(p_n))<1$.
\end{itemize} 
\end{claim}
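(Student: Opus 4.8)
The plan is to produce the orbits $\gamma_n$ by a single connecting construction: inside a horseshoe attached to $\sigma$ I will build a periodic point whose orbit meets a prescribed finite net of $H(\sigma)$ — this gives the Hausdorff convergence — but which spends an overwhelming proportion of its period near $\sigma$, so that dissipativity of $\sigma$ forces the Jacobian to be contracting on average.

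First I would record the standard facts I intend to use about the homoclinic class. The periodic points homoclinically related to $\sigma$ are dense in $H(\sigma)$ (this is the definition of $H(\sigma)$), they all have the same stable index as $\sigma$, and, by the inclination lemma, the relation ``homoclinically related'' is transitive on this family; in particular, since $H(\sigma)\neq\{\sigma\}$, the class $H(\sigma)$ is infinite. I will also use the Birkhoff--Smale theorem in the following form: given finitely many pairwise homoclinically related periodic points, there is a transitive hyperbolic set (a horseshoe) $K$ containing all their orbits, conjugate to a transitive subshift of finite type; moreover $K\subset H(\sigma)$, because the periodic points of $K$ are dense in $K$ and homoclinically related to $\sigma$.

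Fixing $n$, I would then argue as follows. Choose a finite $\tfrac1n$-net $\{x_1,\dots,x_k\}$ of $H(\sigma)$ and, using density, periodic points $q_1,\dots,q_k$ homoclinically related to $\sigma$ with $\dist(q_i,x_i)<\tfrac1n$. Apply the Birkhoff--Smale form above to $\sigma,q_1,\dots,q_k$ to get a horseshoe $K_n\subset H(\sigma)$ containing all these orbits, together with a conjugacy to a transitive subshift. Using this symbolic description I would build a periodic itinerary consisting of a very long block in the fixed symbol of $\sigma$, followed, for each $i$, by a block shadowing $O(q_i)$, all joined by fixed connecting words provided by transitivity, and then repeated periodically; this yields a periodic point $p_n\in K_n\subset H(\sigma)$, hence homoclinically related to $\sigma$. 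Taking the $q_i$-blocks long enough, uniform continuity of the conjugacy makes $O(p_n)$ meet the $\tfrac1n$-neighbourhood of each $q_i$, hence the $\tfrac2n$-neighbourhood of each $x_i$, hence the $\tfrac3n$-neighbourhood of every point of $H(\sigma)$; since moreover $O(p_n)\subset H(\sigma)$, the Hausdorff distance between $O(p_n)$ and $H(\sigma)$ is at most $\tfrac3n$, and, $H(\sigma)$ being infinite, $\pi(p_n)\to\infty$.

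Finally, the Jacobian condition is where dissipativity enters, and this is the heart of the matter — the combinatorial shadowing bookkeeping above is routine if tedious. Write $\mu_{p_n}=\tfrac1{\pi(p_n)}\sum_{l=0}^{\pi(p_n)-1}\delta_{f^l(p_n)}$, so that $|\det Df^{\pi(p_n)}(p_n)|=\exp\bigl(\pi(p_n)\int\log|\det Df|\,d\mu_{p_n}\bigr)$. Keeping the net, the $q_i$ and the connecting words fixed and letting only the length $T$ of the $\sigma$-block tend to infinity, the fraction of the period spent with the itinerary of $\sigma$ tends to $1$, and by uniform hyperbolicity of $K_n$ all but a number of those iterates that is bounded independently of $T$ lie in any prescribed small neighbourhood of $\sigma$; hence the corresponding periodic measures converge weakly to $\delta_\sigma$ as $T\to\infty$. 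Since $\log|\det Df|$ is continuous and $\int\log|\det Df|\,d\delta_\sigma=\log|\det Df(\sigma)|<0$ by dissipativity of $\sigma$, I can pick $T=T_n\ (\geq n)$ large enough that the resulting $p_n$ satisfies $\int\log|\det Df|\,d\mu_{p_n}<0$, i.e. $|\det Df^{\pi(p_n)}(p_n)|<1$, which gives the last item. The main obstacle is precisely making the symbolic construction deliver all four properties simultaneously; note that the Jacobian estimate produced here is exactly what, together with the absence of a dominated splitting on $H(\sigma)$, will let Theorem~\ref{t.bobo} turn some $p_n$ into a sink of a small perturbation of $f$, contradicting $f\in\cR_1$.
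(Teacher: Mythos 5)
Your proposal is correct and follows essentially the same route as the paper, which sketches the argument (citing Lemma 1.10 of \cite{BDP}) by building a horseshoe containing $\sigma$ and a net of homoclinically related periodic points, then using shadowing to produce a periodic orbit that is $\eps$-dense in $H(\sigma)$ while spending an arbitrarily large proportion of its period near $\sigma$, so that dissipativity of $\sigma$ yields the Jacobian bound. Your symbolic-dynamics bookkeeping is just a more explicit version of that same construction.
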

\begin{proof}
This result is contained in Lemma 1.10 of \cite{BDP}. We give a brief sketch of the proof for the comfort of the reader.  
By the shadowing lemma and by hyperbolicity we can create a periodic orbit $\eps$-dense in this horseshoe, and homoclinically related with $\sigma$.
Moreover, these periodic orbits can be chosen so that they shadow $\sigma$ during an arbitrarily large proportion of their orbit, implying that they are dissipative
Letting $\eps$ tend to zero, one creates the desired sequence. It remains to notice that, as $H(\sigma)\neq\{\sigma\}$, the periods of these orbits are unbounded.
\end{proof}

Since $H(\sigma)$ has no dominated splitting, a direct application of Theorem \ref{t.bobo} implies that 
there is $g\in\cU$ with a sink, a contradiction. Thus, there exists a non-trivial finest dominated splitting $E_1\oplus...\oplus E_m\oplus F$
over $H(\sigma)$. 

\begin{claim}
$\det(Df(\sigma)|_{F})>1$. 
\end{claim}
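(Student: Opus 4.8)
The plan is to argue by contradiction: assume $\det(Df(\sigma)|_F) \le 1$. Recall that $\sigma$ is a dissipative saddle, so $\det(Df(\sigma)) < 1$; combined with the splitting $T_\sigma M = E_1(\sigma) \oplus \cdots \oplus E_m(\sigma) \oplus F(\sigma)$ this gives a factorization of the total determinant. The first observation I would make is that the finest dominated splitting over $H(\sigma)$, restricted to the orbit of $\sigma$ (which is just the fixed point), refines the $E^s(\sigma) \oplus E^u(\sigma)$ hyperbolic splitting in a compatible way, so each $E_i(\sigma)$ and $F(\sigma)$ is a sum of eigenspaces of $Df(\sigma)$. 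The domination inequality forces $F$ to be the ``most expanding'' bundle, so in particular $\det(Df(\sigma)|_{E_j})< \det(Df(\sigma)|_F)$ after normalizing by dimensions — more precisely, for each $j$ the normalized expansion $\sqrt[\dim E_j]{\det(Df(\sigma)|_{E_j})}$ is strictly smaller than $\sqrt[\dim F]{\det(Df(\sigma)|_F)}$.

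Next I would invoke the Claim already established above: there is a sequence $\gamma_n = O(p_n)$ of periodic orbits, each homoclinically related with $\sigma$, with $\pi(p_n) \to \infty$, $\det(Df^{\pi(p_n)}(p_n)) < 1$, and $\gamma_n \to H(\sigma)$ in the Hausdorff topology. Since these orbits converge to $H(\sigma)$, for large $n$ they carry a dominated splitting that is subordinate to $E_1 \oplus \cdots \oplus E_m \oplus F$; moreover, because $p_n$ spends a proportion of time arbitrarily close to $1$ near $\sigma$, the quantities $\frac{1}{\pi(p_n)}\log|\det(Df^{\pi(p_n)}(p_n)|_{F})|$ converge to $\log \det(Df(\sigma)|_F) \le 0$, and likewise for each $E_j$ block. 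Now apply Theorem~\ref{t.bobo}: given small $\eps$, for $n$ large there is an $\eps$-$C^1$-perturbation $g \in \cU$, supported near $\gamma_n$ and preserving it, after which all eigenvalues of $Dg^{\pi(p_n)}(p_n)$ inside each $E_j$ (resp. $F$) have modulus $\sqrt[\dim E_j]{\det(Df^{\pi(p_n)}(p_n)|_{E_j})}$ (resp. $\sqrt[\dim F]{\det(Df^{\pi(p_n)}(p_n)|_{F})}$). Since $\det(Df(\sigma)|_F) \le 1$ and every other block has even smaller normalized expansion, for $n$ large all these moduli are strictly below $1$, so $Dg^{\pi(p_n)}(p_n)$ has all eigenvalues inside the unit disk: $p_n$ is a sink for $g$. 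But $g \in \cU$, contradicting the choice of $\cU$ as a neighborhood of $f \in \cR_1$ consisting of sink-free diffeomorphisms. Hence $\det(Df(\sigma)|_F) > 1$.

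The main obstacle I anticipate is the quantitative control of how the periodic orbits $\gamma_n$ see the splitting $E_1 \oplus \cdots \oplus E_m \oplus F$: I need that the per-period normalized expansion rates of $Df^{\pi(p_n)}(p_n)$ restricted to the bundle over $\gamma_n$ converge (as $n \to \infty$) to the corresponding quantities $\det(Df(\sigma)|_{E_j})$ and $\det(Df(\sigma)|_F)$ at the fixed point. This relies on the facts established in the previous Claim — that $p_n$ shadows $\sigma$ for a proportion of time tending to $1$, and that the contributions over the remaining (bounded-expansion, bounded-length-fraction) portion of the orbit are negligible in the exponential average — together with the continuity of the dominated splitting over $H(\sigma)$ near $\sigma$. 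Once this limiting identity is in hand, the strict inequality $\det(Df(\sigma)|_F) > 1$ follows cleanly from the sink-free hypothesis via Theorem~\ref{t.bobo}, and in fact the same perturbation argument shows that no block $E_j$ can dominate $F$ in normalized expansion, which is exactly why $F$ is singled out as the last bundle of the finest splitting.
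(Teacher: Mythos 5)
Your overall strategy is the same as the paper's: argue by contradiction from $\det(Df(\sigma)|_F)\leq 1$, produce the sequence of dissipative periodic orbits homoclinically related with $\sigma$ that shadow $\sigma$ for a proportion of time tending to $1$ (so that the per-period normalized Jacobian on $F$ is eventually below $1+\eps$), homogenize the eigenvalues in each bundle of the finest dominated splitting via Theorem~\ref{t.bobo}, use domination to control the eigenvalues in the bundles $E_1,\dots,E_m$ by those in $F$, and contradict the absence of sinks in the neighborhood $\cU$. Your treatment of why the periodic-orbit averages on $F$ are controlled is in fact more explicit than the paper's.

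There is, however, one step that does not close as written: you assert that after the Bochi--Bonatti perturbation ``all these moduli are strictly below $1$, so $p_n$ is a sink for $g$.'' This does not follow from your own estimates. The hypothesis is only $\det(Df(\sigma)|_F)\leq 1$, and the per-period quantities $\frac{1}{\pi(p_n)}\log|\det(Df^{\pi(p_n)}(p_n)|_F)|$ converge to a limit that is merely $\leq 0$; the convergence can perfectly well be from above (the bounded fraction of the orbit away from $\sigma$ can contribute positively), so for each $n$ you only get a bound of the form $<\log(1+\eps)$, not $<0$. Consequently the homogenized eigenvalues of $Dg^{\pi(p_n)}(p_n)$ may have moduli in $[1,1+\eps)$, and $p_n$ need not be a sink for $g$. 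The paper closes exactly this gap with one further application of Lemma~\ref{l.frank} (Franks' lemma): since all eigenvalue moduli are now at most $1+\eps$ with $\eps$ arbitrarily small, a second small perturbation pushes them strictly inside the unit disk and produces a genuine sink for some $\ov{g}\in\cU$. Adding that final perturbation step makes your argument complete; without it, the borderline case fails.
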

\begin{proof}
Assume once more by contradiction that $\det(Df(\sigma)|_{F})\leq 1$. As before, we get a sequence of periodic orbits
$\gamma_n=O(p_n)$, converging to $H(\sigma)$ and such that $\det(Df^{\pi(p_n)}(p_n)|_{F})<1+\eps$, with $\eps>0$ as small as we please.
Applying Theorem \ref{t.bobo} once more, we obtain $g$ close to $f$, such that $\gamma_n$, with $n$ large, is a periodic orbit for $g$,
and all the eigenvalues of $Dg^{\pi(p_n)}(p_n)$ in the subspace $F$ are bounded by $1+\eps$. Since $E_1\oplus...\oplus E_m\oplus F$
is a dominated splitting, all the remaining eigenvalues of $Dg^{\pi(p_n)}(p_n)$ are bounded by $1+\eps$. Thus, applying Lemma~\ref{l.frank}, by another
small perturbation we can create a sink, for some $\ov{g}\in\cU$, a contradiction.
\end{proof}

The proof is now complete.
\qed

\subsection{Existence of Lyapunov stable sets}

The result below is our source of Lyapunov stability. See \cite{MP} and Corollaire 1.8 in \cite{BC}.

\begin{theorem}
\label{t.mp}
There exists a residual set $\cR_2\subset\dif$ such that for every $f\in\cR_2$, for a residual set $\cR_f\subset M$,
if $x\in\cR_f$ then $\omega(x)$ is a Lyapunov stable set. Moreover, if a homoclinic class intersects $\omega(x)$, then they most
coincide 
\end{theorem}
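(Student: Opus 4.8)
The statement is a repackaging of two standard features of $C^1$ generic dynamics; the plan is to take $\cR_2\subset\dif$ to be the intersection of the residual sets furnished by \cite{MP} and \cite{BC} and then check that the two conclusions fit together, rather than to reprove those sources. For the Lyapunov stability one combines Conley theory with a Baire argument. Fix a countable basis $\{V_n\}_{n\in\N}$ of the topology of $M$ and let $\cU$ be the countable family of \emph{attracting regions}, that is, the finite unions $U=V_{n_1}\cup\dots\cup V_{n_k}$ with $f(\overline{U})\subset U$. For $U\in\cU$ the set $A_U=\bigcap_{n\geq 0}f^n(U)$ is a compact invariant \emph{attracting set}: it is Lyapunov stable, its topological basin $B_U=\bigcup_{n\geq 0}f^{-n}(U)$ is open, one has $\omega(x)\subset A_U$ whenever $x\in B_U$, and $\omega(x)\cap U=\emptyset$ whenever $x\notin B_U$ (since $U$ is open). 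A decreasing intersection $Q=\bigcap_k A_{U_k}$ of such sets --- a \emph{quasi-attractor} --- is again Lyapunov stable: any neighbourhood of $Q$ contains some $A_{U_k}$ by compactness, and this $A_{U_k}$ is Lyapunov stable. Hence it is enough to produce, for $f\in\cR_2$, a residual set $\cR_f\subset M$ such that $\omega(x)$ is a quasi-attractor for every $x\in\cR_f$; this is precisely the $C^1$ generic statement of \cite{MP}. The heart of that argument is that for generic $f$ the open set $\bigcup_{U\in\cU}B_U$ is dense, hence residual; a further Baire refinement, which rules out $\omega(x)$ being strictly larger than a quasi-attractor it contains, then pins $\omega(x)$ down to the smallest quasi-attractor through it, for $x$ in a residual set.

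\textbf{Coincidence with homoclinic classes.}
For the ``moreover'' part I would use the elementary fact that a chain recurrence class $C$ meeting an attracting region $U\in\cU$ is contained in $A_U$: an $\eps$-chain with $\eps$ small issuing from a point of $C\cap U$ cannot leave $U$, so $C\subset U$, and invariance then gives $C\subseteq\bigcap_{n\geq 0}f^n(U)=A_U$. By the previous paragraph, $\omega(x)$ is, for $x\in\cR_f$, a quasi-attractor $\bigcap_k A_{U_k}$; its chain recurrence class $C$ meets each $U_k$ (since $\omega(x)\subset U_k$), so $C\subseteq\bigcap_k A_{U_k}=\omega(x)$, and as $\omega(x)\subseteq C$ we obtain that $\omega(x)=C$ is a chain recurrence class. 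On the other hand, Corollaire 1.8 of \cite{BC} provides a residual set of diffeomorphisms --- which I fold into $\cR_2$ --- for which the chain recurrence class of every periodic point coincides with its homoclinic class. Consequently, if a homoclinic class $H(p)$ meets $\omega(x)$ with $x\in\cR_f$, then $H(p)$ and $\omega(x)$ lie in a single chain recurrence class $C$, whence $C=H(p)=\omega(x)$.

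\textbf{Main obstacle.}
The only genuinely non-trivial input --- and the one place where $C^1$ genericity cannot be replaced by soft topology --- is the density of $\bigcup_{U\in\cU}B_U$, equivalently the assertion that the $\omega$-limit set of a generic point is a quasi-attractor. This fails badly for a non-generic map: a diffeomorphism of the circle with a single, parabolic fixed point $p$ has $\omega(x)=\{p\}$ for \emph{every} $x$, while $\{p\}$ is not Lyapunov stable. Thus the proof must pass through the $C^1$ connecting lemma for pseudo-orbits (in the form exploited in \cite{BC}), which is exactly what allows one to push a forward orbit into a prescribed attracting region by an arbitrarily small perturbation, so that the Baire reduction over the countable family $\cU$ can be carried out; every other ingredient above is routine.
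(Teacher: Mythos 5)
Your proposal is correct and follows essentially the same route as the paper, which states this theorem without proof as a direct citation of \cite{MP} and of Corollaire 1.8 in \cite{BC}. Your reconstruction --- quasi-attractors are Lyapunov stable, generically $\omega(x)$ is a quasi-attractor for residually many $x$ (the one genuinely hard input, correctly deferred to the connecting lemma for pseudo-orbits), a quasi-attractor that is an $\omega$-limit set is a whole chain recurrence class, and generically chain recurrence classes of periodic points coincide with their homoclinic classes --- is a faithful and correct unpacking of exactly those two references.
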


\subsection{$C^1$ generic Dirac physical measures}

Finally, we state a $C^1$-generic consequence of Proposition~\ref{p.alrozero}

\begin{lemma}
\label{l.aldro}
There exists an open and dense set $\cA\subset\dif$ such that for every $f\in\cA$ and every $\sigma\in\fix(f)$, if $\delta_{\sigma}$ is a physical measure 
then $|\det(Df(\sigma))|<1$. 
\end{lemma}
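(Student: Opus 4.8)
The statement to prove is Lemma~\ref{l.aldro}: there is an open and dense $\cA\subset\dif$ such that for $f\in\cA$ and $\sigma\in\fix(f)$, if $\delta_\sigma$ is physical then $|\det(Df(\sigma))|<1$. Proposition~\ref{p.alrozero} already gives the non-strict inequality $|\det(Df(\sigma))|\le 1$ for \emph{every} $f$; the point of the lemma is to upgrade this to a strict inequality on an open and dense set. The natural idea is: the ``bad'' situation is $|\det(Df(\sigma))|=1$, and this is a codimension-one condition on the derivative at a fixed point, which can be destroyed by an arbitrarily small perturbation (Lemma~\ref{l.frank}) and, once destroyed, stays destroyed on a neighborhood.

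**Step 1: reduce to a finite, locally constant set of fixed points.** Let $\cD\subset\dif$ be the (open and dense, $C^1$-generic, by Kupka--Smale) set of diffeomorphisms all of whose fixed points are hyperbolic. Actually, I would be more careful: I only need an \emph{open and dense} set, not a residual one, so I would take $\cD$ to be the open and dense set of $f$ for which every fixed point is hyperbolic. For $f\in\cD$, $\fix(f)$ is finite and, on a small neighborhood $\cU_f$ of $f$, it varies continuously: each $\sigma\in\fix(f)$ has a continuation $\sigma_g$ for $g\in\cU_f$, and no new fixed points appear. In particular $|\det(Dg(\sigma_g))|$ depends continuously on $g$.

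**Step 2: open condition.** Define
$$\cA:=\left\{f\in\cD;\ \text{for every }\sigma\in\fix(f),\ |\det(Df(\sigma))|\neq 1\right\}.$$
By Step 1 and continuity of $g\mapsto|\det(Dg(\sigma_g))|$, the set $\cA$ is open. For $f\in\cA$ and $\sigma\in\fix(f)$ with $\delta_\sigma$ physical, Proposition~\ref{p.alrozero} gives $|\det(Df(\sigma))|\le 1$, and $f\in\cA$ rules out equality, so $|\det(Df(\sigma))|<1$, which is exactly the conclusion.

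**Step 3: density of $\cA$.** Since $\cD$ is dense it suffices to approximate an arbitrary $f\in\cD$ by elements of $\cA$. Enumerate $\fix(f)=\{\sigma_1,\dots,\sigma_r\}$. Working one fixed point at a time and using Lemma~\ref{l.frank}: at $\sigma_j$, pick an eigenvalue modulus $\lambda_i$ of $Df(\sigma_j)$ and replace it by $\lambda^*$ arbitrarily close to $\lambda_i$, chosen so that the new determinant modulus is $\neq 1$; the perturbation is supported in an arbitrarily small neighborhood of $O(\sigma_j)=\{\sigma_j\}$ and leaves $f$ unchanged near the other fixed points, so the perturbations at $\sigma_1,\dots,\sigma_r$ can be made with disjoint supports and composed. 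After finitely many such arbitrarily small perturbations we land in $\cA$ (the perturbed map still has all fixed points hyperbolic if the perturbations are small enough, and now every fixed-point determinant modulus avoids $1$). Hence $\cA$ is dense. This completes the proof. \qed

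**Main obstacle.** There is no deep obstacle here — this is a soft perturbative argument. The only points requiring a little care are: (i) making sure that after perturbing at one fixed point we do not spoil hyperbolicity at the others, which is handled by disjoint supports plus smallness; and (ii) the bookkeeping that no \emph{new} fixed points are created by a small perturbation when all original fixed points are hyperbolic, so that $\fix$ genuinely has a finite continuation on $\cU_f$. Both are standard consequences of the implicit function theorem / hyperbolicity, and Lemma~\ref{l.frank} packages exactly the perturbation we need.
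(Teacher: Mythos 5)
Your proposal is correct and follows essentially the same route as the paper: the paper also defines $\cA$ by the open condition that fixed-point determinants avoid modulus $1$, obtains density via Lemma~\ref{l.frank}, and concludes with Proposition~\ref{p.alrozero}. Your version merely spells out the details (restricting first to the open dense set where all fixed points are hyperbolic, which in fact makes the openness argument cleaner than the paper's one-line assertion).
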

\begin{proof}
Notice that the set $$\{f\in\dif;\textrm{for every hyperbolic fixed point}\:p,\:|\det(Df(p))|\neq 1\}$$ is open. Using Lemma~\ref{l.frank}
one easily gets that it is dense. The conclusion follows now from Proposition~\ref{p.alrozero}. 
\end{proof}

\section{Entropy estimation and Proof of Theorem~\ref{t.main}}\label{prova}

When we have a Lyapunov stable set with a dominated splitting, we can perform an entropy estimation using the result below. See \cite{CCE}.

\begin{theorem}[Catsigeras-Cerminara-Enrich]
\label{t.eleonora}
Let $\Lambda$ be a Lyapunov stable set for $f\in\dif$. Suppose that there exists a dominated splitting $T_{\Lambda}M=E\oplus F$.
Then, for every fisical measure $\mu$ supported in $\Lambda$ one has $$h_{\mu}(f)\geq\int\sum_{l=1}^{\dim F}\chi_l(x)d\mu.$$
\end{theorem}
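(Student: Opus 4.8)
The plan is to derive the inequality from the two features of the hypothesis: physicality of $\mu$, which by definition provides a set of positive Lebesgue measure of points whose empirical measures converge to $\mu$, and the dominated splitting, which forces uniform exponential growth of $\dim F$-dimensional volume. First I would extend the splitting to a neighbourhood: since $T_{\Lambda}M=E\oplus F$ is dominated, there is an open neighbourhood $U$ of $\Lambda$ carrying a continuous cone field around $F$ which is $Df$-invariant along orbits that remain in $U$, and on $U$ the $F$-Jacobian $J$ — the infinitesimal rate of change of $\dim F$-dimensional volume of plaques tangent to this cone field — is a well defined continuous function whose restriction to $\Lambda$ equals $|\det(Df|_{F})|$. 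Using that $\Lambda$ is Lyapunov stable I would then shrink $U$ so that a still smaller neighbourhood $W$ of $\Lambda$ satisfies $f^{n}(W)\subset U$ for all $n\ge 0$.

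Next I would extract geometry from physicality. If $x\in\cB(\mu)$ then its empirical measures converge to $\mu$; since $\supp\mu\subset\Lambda$ the forward orbit of $x$ spends asymptotically all of its time near $\Lambda$, hence eventually enters $W$ and, by Lyapunov stability, stays in $U$ forever after. Replacing $\cB(\mu)$ by a forward iterate of a positive-measure subset, I obtain a set $B\subset\cB(\mu)$ of positive Lebesgue measure with $f^{n}(B)\subset U$ for all $n\ge 0$. Foliating a small piece of $U$ by $C^{1}$ disks tangent to the cone field and applying Fubini, I select one disk $D$ for which $D\cap B$ has positive induced Lebesgue measure; for $x\in D\cap B$ the orbit stays in $U$ and small Bowen balls of $x$ stay tangent to the cone field, so, writing $J_{n}(x)$ for the $\dim F$-Jacobian of $f^{n}$ at $x$, the averages $\frac1n\log J_{n}(x)$ — which are the integrals of the continuous function $\log J$ against the empirical measures of $x$ — converge to $\lambda:=\int\log|\det(Df|_{F})|\,d\mu$.

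Finally I would convert this into the entropy bound. By Egorov there is a subset $D'\subset D\cap B$ of positive induced Lebesgue measure on which $J_{n}(x)\ge e^{n(\lambda-\varepsilon)}$ for all large $n$; combined with the sub-exponential distortion control along cone-field disks that the domination provides, this bounds the induced Lebesgue mass on $D$ of a Bowen $(n,\rho)$-ball centred at a point of $D'$ by a constant times $\rho^{\dim F}e^{-n(\lambda-\varepsilon)}$. A Billingsley--Frostman-type argument then shows that the Bowen topological entropy of $D'$, hence of the larger set $\cB(\mu)$, is at least $\lambda-\varepsilon$; and since every point of $\cB(\mu)$ has all of its empirical measures equal to $\mu$, the general fact that the Bowen entropy of a set of $\mu$-generic points does not exceed $h_{\mu}(f)$ yields $h_{\mu}(f)\ge\lambda-\varepsilon$. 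Letting $\varepsilon\to 0$ gives $h_{\mu}(f)\ge\lambda=\int\log|\det(Df|_{F})|\,d\mu$, which by Oseledets' theorem and the domination is the right-hand side of the asserted inequality. I expect the main obstacle to be precisely this last step: passing from growth of volume, which a priori controls only the topological entropy of $\Lambda$, to a genuine lower bound for the \emph{metric} entropy $h_{\mu}$ — the anti-Ruelle direction, false in general, which is exactly where physicality of $\mu$ is indispensable, forcing the abundantly many separated orbits produced to equidistribute towards $\mu$.
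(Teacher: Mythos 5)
This statement is not proved in the paper at all: it is quoted as a black box from \cite{CCE}, so there is no internal argument to compare against. Your reconstruction is a genuine and, in outline, correct proof strategy, but it is not the route of the cited source: Catsigeras--Cerminara--Enrich work with the broader class of SRB-like (pseudo-physical) measures and bound $h_{\mu}(f,\mathcal{P})$ from below for adapted finite partitions, estimating how many atoms of $\bigvee_{j<n}f^{-j}\mathcal{P}$ are needed to cover the Lebesgue-typical orbit segments that generate $\mu$. You instead go through Bowen's dimension-like entropy of noncompact sets, combining a mass-distribution (entropy distribution) principle on a disk tangent to the $F$-cone field with Bowen's 1973 theorem that the entropy of the set of $\mu$-generic points is at most $h_{\mu}(f)$. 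That theorem is exactly the right bridge from volume growth to \emph{metric} entropy, and you correctly identify it as the crux; your use of Lyapunov stability to trap a positive-measure piece of the basin (hence a cone-field disk of positive induced measure) in a neighbourhood where the domination persists is also the intended role of that hypothesis.

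Two points in your sketch need repair. First, domination does \emph{not} provide ``sub-exponential distortion control'' in the $C^1$ category --- the absence of any distortion control is the whole difficulty of the $C^1$ Pesin formula. Fortunately you do not need it: the one-sided estimate $\operatorname{Leb}_D(A)\le(\inf_A J_n)^{-1}\operatorname{Leb}_{f^nD}(f^n A)$ uses only the uniform lower bound on $J_n$ that Egorov gives on $D'$, together with the bound $\operatorname{Leb}_{f^nD}\bigl(f^n(B_n(x,\rho)\cap D)\bigr)\le C\rho^{\dim F}$; the latter must be proved by induction using the confinement of \emph{all} intermediate iterates to $\rho$-balls, since otherwise $f^nD$ could fold back into $B(f^nx,\rho)$ exponentially many times and the full Bowen ball (which the mass-distribution principle requires, not just its connected component) would not obey that bound. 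Second, $\log J$ is not a function on $M$: the Jacobian of $f$ along the iterated disk depends on the evolving tangent plane, so one cannot literally ``integrate $\log J$ against the empirical measures.'' One must use the cone contraction furnished by domination to compare $\frac1n\log J_n$ with Birkhoff averages of the continuous functions $\frac1m\log\lvert\det(Df^m|_{\tilde F})\rvert$ for large $m$, where $\tilde F$ is a continuous (non-invariant) extension of $F$ to the trapping neighbourhood; this is standard but is precisely where the splitting is used quantitatively. Finally, the statement's $\sum_{l=1}^{\dim F}\chi_l$, read against the paper's convention $\chi_1\le\cdots\le\chi_d$, must be interpreted as the sum of the exponents \emph{along} $F$ (the top ones), which is what your $\int\log\lvert\det(Df|_F)\rvert\,d\mu$ computes and what the application to $\delta_{\sigma}$ requires.
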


\begin{proof}[Proof of Theorem \ref{t.main}]
Recall Kupka-Smale's theorem \cite{pdm} which asserts that there exists a residual $\cR_{KS}\subset\dif$ such that for every $f\in\cR_{KS}$, every 
periodic orbit is hyperbolic.
Pick $f\in\cR_{KS}\cap\cR_1\cap\cR_2\cap\cA$, let $\sigma\in\fix(f)$ be such that $\overline{\cB(\delta_{\sigma})}=M$ 
and $m(\cB(\delta_{\sigma}))>0$. 

Assume by contradiction that $\sigma$ is not a sink. Then it is a hyperbolic saddle since it obviously cannot be a source and $f\in\cR_{KS}$. 

Moreover, $f$ has no sinks, since otherwise $\cB(\delta_{\sigma})$ would intersect the basin of such a sink, which is impossible. 

By Lemma~\ref{l.aldro},
$|\det Df(\sigma)|<1$, and thus Lemma \ref{l.bobo} implies that the homoclinic class $H(\sigma)$ admits a 
dominated splitting $E\oplus F$ such that $\det(Df(\sigma)|_{F})>1$.  

Also, by Corollary~\ref{c.acumula} and Theorem~\ref{t.mp} we obtain a point $y\in M$ such that $\omega(y)$ is a Lyapunov stable set and $\sigma\in\omega(y)$. As a consequence $H(\sigma)\cap\omega(y)\neq\emptyset$.
Applying the second part of Theorem \ref{t.mp}, we conclude that the homoclinic class $H(\sigma)$ is a Lyapunov stable set.

Finally, since $\delta_{\sigma}$ is a physical measure supported in $H(\sigma)$, we can apply Theorem \ref{t.eleonora} and conclude that
$$h_{\delta_{\sigma}}(f)\geq\sum_{i=1}^{\dim F}\log\lambda_i,$$
where the numbers $\lambda_i$ are the modulus of the eigenvalues of $Df(\sigma)$ in the subspace $F$. Since $\det(Df(\sigma)|_{F})>1$, one obtains
that $h_{\delta_{\sigma}}(f)>0$, which is absurd. This completes the proof.
\end{proof}

\end{document}